\definecolor{Gray3}{gray}{0.8}
\definecolor{Gray1}{gray}{0.5}
\theoremstyle{definition}
\newtheorem*{example*}{Example}
\newtheorem{theorem}[equation]{Theorem}
\newtheorem{lemma}[equation]{Lemma}
\newtheorem{corollary}[equation]{Corollary}
\newtheorem{conjecture}[equation]{Conjecture}
\newtheorem*{conjecture*}{Conjecture}
\newtheorem*{question*}{Question}
\newtheorem*{problem*}{Problem}
\newtheorem*{theorem*}{Theorem}
\newcommand{\PP}{\mathbb{P}}
\theoremstyle{remark}
\newtheorem*{remark*}{Remark}
\makeatletter\@addtoreset{equation}{section} \makeatother
\title{On geometry of Fano threefold hypersurfaces}
\author{Hamid Ahmadinezhad, Ivan Cheltsov, Jihun Park}
\begin{document}

\begin{abstract} 
We prove that a quasi-smooth Fano threefold hypersurface is birationally rigid if and only if it has Fano index one.
\end{abstract}

\address{\emph{Hamid Ahmadinezhad}
\newline
\textnormal{Department of Mathematical Sciences, Loughborough University, Loughborough LE11 3TU, UK.}
\newline
\textnormal{\url{h.ahmadinezhad@lboro.ac.uk}}}

\address{\emph{Ivan Cheltsov}
\newline
\textnormal{School of Mathematics, The University of Edinburgh,  Edinburgh, UK.}
\newline
\textnormal{Laboratory of Algebraic Geometry and its applications, Higher School of Economics, Moscow, Russia.}
\newline
\textnormal{\url{I.Cheltsov@ed.ac.uk}}}

\address{ \emph{Jihun Park}\newline \textnormal{Centre for Geometry and Physics, Institute for Basic Science
\newline \medskip 77 Cheongam-ro, Nam-gu, Pohang, Gyeongbuk, 37673, Korea \newline
Department of Mathematics, POSTECH
\newline
77 Cheongam-ro, Nam-gu, Pohang, Gyeongbuk,  37673, Korea \newline
\texttt{wlog@postech.ac.kr}}}

\subjclass{}

\keywords{}

\maketitle

\section{Introduction}
\label{section:intro}

End points of Minimal Model Program are either Mori fibre spaces or minimal models. 
In dimension three, Mori fibre spaces form three classes: Fano threefolds, del Pezzo fibrations over curves, and conic bundles over surfaces.
They are $\mathbb{Q}$-factorial with at worst terminal singularities, and with relative Picard number one. 
The focus of this article is on Fano threefolds.
They lie in finitely many deformation families (see \cite{Birkar} and \cite{Kawamata}),
and studying birational relations among them, as well as birational maps to other Mori fibre spaces is fundamental, 
as it sheds light to birational classification of rationally connected threefolds in general.

For a Fano threefold $X$ with terminal $\mathbb{Q}$-factorial singularities, let $A$ be a Weil divisor for which $-K_X=\iota_X A$ with maximum $\iota_X\in\mathbb{Z}_{>0}$.
This integer $\iota_X$ is known as the Fano \emph{index} of $X$.
It follows from \cite{ABR} and \cite{Brown-Suzuki} that there are precisely $130$ families of Fano threefolds such that the $\mathbb{Z}$-graded ring
$$
R(X,A)=\bigoplus_{m\geqslant 0}H^0\big(X,-mA\big)
$$ 
has $5$ generators. Denote these generators by  $x,y,z,t,w$, respectively in degrees $a_0,a_1,a_2,a_3,a_4$, and let the algebraic relation amongst them be $f(x,y,z,t,w)=0$ of weighted degree $d$. 
This redefines $X$ as a hypersurface in the weighted projective space $\mathbb{P}(a_0,a_1,a_2,a_3,a_4)$ that is given by 
the quasi-homogeneous equation
$$
f(x,y,z,t,w)=0,
$$
and the Fano index of $X$ is computed via $\iota_X=\sum_{i=0}^{4}a_i-d$.
Among these $130$ families, exactly $95$ have index one and their birational geometry is well-studied. Notably, the following theorem holds.

\begin{theorem}[{\cite{CP,CPR}}] 
\label{theorem:CPR}
All index one quasi-smooth Fano threefold hypersurfaces are birationally rigid.
\end{theorem}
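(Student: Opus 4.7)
The strategy is the method of maximal singularities. Given any birational map $\chi: X \dashrightarrow X'$ from $X$ to a Mori fibre space $X'/S$, pull back a sufficiently ample mobile linear system from $X'$ to obtain a mobile system $\mathcal{M}\subset|-nK_X|$ on $X$ for some positive integer $n$. By the Noether--Fano inequality, if $\chi$ is not square-birational to the identity then the log pair $\left(X,\tfrac{1}{n}\mathcal{M}\right)$ fails to be canonical, and hence admits a \emph{maximal centre}: a subvariety $Z\subset X$ that is the centre of a geometric valuation $\nu$ with discrepancy strictly less than $\tfrac{1}{n}\mathrm{mult}_\nu\mathcal{M}$. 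Birational rigidity of $X$ will follow once, for every possible $Z$, one either derives a numerical contradiction or exhibits a Sarkisov link that untwists $\chi$ into a birational self-map of $X$.

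The candidate maximal centres are an irreducible curve, a smooth point of $X$, or one of the terminal cyclic quotient singularities of $X$. Curves are excluded first: since a general member of $\mathcal{M}$ is linearly equivalent to $-nK_X$ one has $\mathcal{M}^2\cdot(-K_X)\le n^2(-K_X)^3$, and as $(-K_X)^3$ is very small in the index one families, a maximal curve $C$ would have tiny anticanonical degree, leaving only a handful of candidates in each of the $95$ families which are dispatched by direct geometric inspection. Smooth points are excluded by Pukhlikov's $4$-inequality $\mathrm{mult}_p^2(\mathcal{M}_1\cdot\mathcal{M}_2)>4n^2$ at a smooth maximal centre, together with the global intersection identity $\mathcal{M}_1\cdot\mathcal{M}_2\cdot(-K_X)=n^2(-K_X)^3$, which gives an immediate contradiction in the index one case.

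The main obstacle, and essentially the entire substance of the theorem, is the analysis at the terminal cyclic quotient singularities. At each singular point $p$ of type $\tfrac{1}{r}(1,a,r-a)$, one performs the Kawamata weighted blow-up $\sigma:Y\to X$ extracting the unique exceptional divisor $E_p$ of discrepancy $1/r$; the hypothesis that $p$ be a maximal centre then reads $\mathrm{ord}_{E_p}(\sigma^*\mathcal{M})>n/r$. To refute it one constructs a nef \emph{test class} $T$ on $Y$ with numerical properties tailored to the family and derives a contradiction from the non-negativity of intersections of the form $T\cdot(\sigma^*\mathcal{M}-\tfrac{n}{r}E_p)^2$. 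In a handful of instances the $E_p$-negative extremal ray of $Y$ does yield a genuine Sarkisov link, but one verifies directly that this link returns to $X$ as a quadratic or elliptic involution (the classical Kawamata--Reid untwistings), which is consistent with rigidity. The delicate work is that such a ``test class'' analysis, together with the explicit construction or exclusion of links, must be carried out for every singular point in every one of the $95$ families, and this combinatorial bulk is the real difficulty of the proof.
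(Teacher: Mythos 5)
This theorem is not proved in the paper at all---it is quoted from \cite{CP,CPR}---so the only meaningful comparison is with those references, and your outline (Noether--Fano inequality producing a maximal centre, exclusion of curves by anticanonical-degree bounds, exclusion of smooth points via the $4n^2$ multiplicity inequality, and the Kawamata blow-up plus test-class analysis at the cyclic quotient singularities with untwisting by quadratic and elliptic involutions) is precisely the maximal-singularities strategy carried out there. As you yourself acknowledge, the entire substance of the theorem lies in executing that analysis for every singular point of every one of the $95$ families, so your text is a faithful road map of the cited proof rather than a proof.
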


In particular, all index one quasi-smooth Fano threefold hypersurfaces are irrational.
The remaining families of Fano threefold hypersurfaces with $\iota_X\geqslant 2$ are listed in Table~\ref{table-rat},
and their rationality has been studied by several people: 
the irrationality of every smooth cubic threefold (family \textnumero\,96) has been proved by Clemens and Griffiths \cite{Clemens-Griffiths},
the irrationality of all smooth threefolds in family \textnumero\,97 has been proved by Voisin \cite{Voisin},
and the irrationality all smooth threefolds in family \textnumero\,98 has been proved by Grinenko \cite{Grin1,Grin2}.
Recently, the complete classification of families whose general member is irrational was carried out by Okada~\cite{Okada-rational}. 
Whether a general Fano threefold in a given family is rational or not is also indicated in Table~\ref{table-rat}. 

\begin{table}[h]\small
\label{table-rat}
\caption{\footnotesize Fano threefold hypersurfaces of index $\iota_X\geqslant 2$ and their rationality data.}
$$
\renewcommand\arraystretch{1.5}
\begin{array}{|c|c|c|c||c|c|c|c|}
\hline
\text{\textnumero}&X_d\subset\PP(a_0,a_2,a_2,a_3,a_4)&\iota_X&\text{Rational}&\text{\textnumero}&X_d\subset\PP(a_0,a_2,a_2,a_3,a_4)&\iota_X&\text{Rational}\\
\hline
\hline
96&X_{3}\subset\PP(1,1,1,1,1)&2&\text{No}&114&X_6\subset\PP(1,1,2,3,4)&5&\text{Yes}\\
\hline
97 &X_{4}\subset\PP(1,1,1,1,2)&2&\text{No}&115&X_6\subset\PP(1,2,2,3,3)&5&\text{Yes}\\
\hline
98 &X_{6}\subset\PP(1,1,1,2,3)&2&\text{No}&116&X_{10}\subset\PP(1,2,3,4,5)&5&\text{No}\\
\hline
99 &X_{10}\subset\PP(1,1,2,3,5)&2&\text{No}&117&X_{15}\subset\PP(1,3,4,5,7)&5&\text{No}\\
\hline
100 &X_{18}\subset\PP(1,2,3,5,9)&2&\text{No}&118&X_6\subset\PP(1,1,2,3,5)&6&\text{Yes}\\
\hline
101 &X_{22}\subset\PP(1,2,3,7,11)&2&\text{No}&119&X_6\subset\PP(1,2,2,3,5)&7&\text{Yes}\\
\hline
102 &X_{26}\subset\PP(1,2,5,7,13)&2&\text{No}&120&X_6\subset\PP(1,2,3,3,4)&7&\text{Yes}\\
\hline
103 &X_{38}\subset\PP(2,3,5,11,19)&2&\text{No}&121&X_8\subset\PP(1,2,3,4,5)&7&\text{Yes}\\
\hline
104 &X_{2}\subset\PP(1,1,1,1,1)&3&\text{Yes}&122&X_{14}\subset\PP(2,3,4,5,7)&7&\text{No}\\
\hline
105 &X_3\subset\PP(1,1,1,1,2)&3&\text{Yes}&123&X_6\subset\PP(1,2,3,3,5)&8&\text{Yes}\\
\hline
106 &X_4\subset\PP(1,1,1,2,2)&3&\text{Yes}&124&X_{10}\subset\PP(1,2,3,5,7)&8&\text{Yes}\\
\hline
107 &X_6\subset\PP(1,1,2,2,3)&3&\text{No}&125&X_{12}\subset\PP(1,3,4,5,7)&8&\text{Yes}\\
\hline
108 &X_{12}\subset\PP(1,2,3,4,5)&3&\text{No}&126&X_6\subset\PP(1,2,3,4,5)&9&\text{Yes}\\
\hline
109 &X_{15}\subset\PP(1,2,3,5,7)&3&\text{No}&127&X_{12}\subset\PP(2,3,4,5,7)&9&\text{Yes}\\
\hline
110 &X_{21}\subset\PP(1,3,5,7,8)&3&\text{No}&128&X_{12}\subset\PP(1,4,5,6,7)&11&\text{Yes}\\
\hline
111 &X_4\subset\PP(1,1,1,2,3)&4&\text{Yes}&129&X_{10}\subset\PP(2,3,4,5,7)&11&\text{Yes}\\
\hline
112&X_6\subset\PP(1,1,2,3,3)&4&\text{Yes}&130&X_{12}\subset\PP(3,4,5,6,7)&13&\text{Yes}\\
\hline
113&X_4\subset\PP(1,1,2,2,3)&5&\text{Yes}&&&&\\
\hline
\end{array}
$$
\end{table}

We are interested in irrational Fano threefolds, and their birational geometry. 
In this paper, we prove that, contrary to index one case, there are no birationally rigid Fano threefold hypersurfaces with $\iota_X\geqslant 2$.
To be precise, our main result is 

\begin{theorem}
\label{main-theorem} 
Let $X$ be a quasi-smooth Fano threefold in family \textnumero\,$n$ in Table~\ref{table-rat}. Then
\begin{enumerate}[(i)]
\item there exists a birational map to a Mori fibration over a curve or a surface if
$$
n\not\in\{100, 101, 102, 103, 110\} \quad\text{(see Section\,\ref{fibration})},
$$
\item there exists an elementary Sarkisov link from $X$ to another Fano threefold if
$$
n\in\{100, 101, 102, 103, 110\}\quad \text{(see Section\,\ref{Fano-models})}.
$$
\end{enumerate}
In particular, if  $\iota_X\geqslant 2$, then $X$ is not birationally rigid.
\end{theorem}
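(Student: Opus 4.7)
The plan is to construct the required birational maps explicitly, one family at a time, by running a 2-ray game on a suitable extremal extraction from $X$. For each family listed in Table~\ref{table-rat}, I would identify a distinguished center $p\in X$ --- typically a terminal cyclic quotient singularity lying at a coordinate vertex of $\PP(a_0,a_1,a_2,a_3,a_4)$, sometimes a smooth point or a special curve --- and perform a Kawamata blowup (or, when appropriate, another divisorial extraction) $\pi\colon Y\to X$ centred at $p$. Since $\pi$ is a divisorial extraction from a $\mathbb{Q}$-factorial terminal threefold of Picard rank one, $Y$ is a $\mathbb{Q}$-factorial terminal threefold of Picard rank two, so $\overline{\mathrm{NE}}(Y)$ has exactly two extremal rays; one is contracted by $\pi$, and the other initiates an elementary Sarkisov link.

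To execute the link, I would embed $Y$ into the toric variety $\mathcal{T}$ obtained by blowing up $\PP(a_0,a_1,a_2,a_3,a_4)$ at $p$ and use variation of GIT on $\mathcal{T}$. The Mori chamber decomposition of the effective cone of $\mathcal{T}$ prescribes a sequence of toric small modifications together with a final toric contraction. Restricting to $Y$ --- after checking that at each wall-crossing $Y$ meets the corresponding flipping/flopping locus in the expected dimension --- yields the sequence of flips on $Y$ and the final Mori contraction. The nature of that final contraction, which one reads off from the last wall crossed, determines whether the output is another Fano threefold (case (ii)) or a Mori fibration over a curve or a surface (case (i)).

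With this framework in place, for the thirty families with $n\notin\{100,101,102,103,110\}$ the second extremal ray, after finitely many flips, is contracted onto a positive-dimensional base, giving the Mori fibre space of case (i); the concrete centres and 2-ray games are to be recorded family by family in Section~\ref{fibration}. For the remaining five families, the 2-ray game instead terminates with a divisorial contraction onto a quasi-smooth Fano threefold $X'$, which is to be identified explicitly together with the link $X\dashrightarrow X'$ in Section~\ref{Fano-models}. Non-rigidity of $X$ then follows in both cases: in (i) there is a birational map from $X$ to a Mori fibre space of Picard rank strictly greater than one over a positive-dimensional base, and in (ii) the link $X\dashrightarrow X'$ is not an isomorphism (it is a composition of a Kawamata blowup, flips and a divisorial contraction), so by definition $X$ is not birationally rigid.

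The main obstacle is the case-by-case verification that the toric 2-ray game on $\mathcal{T}$ genuinely restricts to a 2-ray game on $Y$. At every wall-crossing one must confirm that no divisor of $Y$ lies inside the flipping locus, that $\mathbb{Q}$-factoriality and terminality of $Y$ are preserved, and that the discrepancies and intersection numbers behave as predicted by the weights $(a_0,\dots,a_4)$ and the degree $d$. The most delicate families are those whose ambient weighted projective space has many coordinate singularities and where the initial Kawamata blowup is followed by a long chain of flips of varying indices; in such cases the quasi-smoothness of $X$ has to be exploited to control low-degree monomials of the defining equation $f$ and thereby rule out unwanted behaviour of $Y$ along the toric exceptional loci.
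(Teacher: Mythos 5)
Your strategy for part (ii) is essentially the paper's: for the five families $n\in\{100,101,102,103,110\}$ the paper performs the Kawamata blow up of a distinguished quotient singularity, embeds $Y$ in a rank-two toric variety, and runs the toric 2-ray game, which restricts to a link on $Y$ ending in a divisorial contraction to a new Fano threefold. However, for part (i) your proposal has a genuine gap, and it misses the paper's key idea. You propose to treat all thirty families $n\notin\{100,101,102,103,110\}$ by the same machinery --- find a centre, blow up, run a 2-ray game, and claim that the game terminates in a fibration --- but you never identify these centres, and there is no a priori reason the game should end in a fibration rather than a divisorial contraction or a bad link; indeed, the paper's Section~\ref{sketch} shows that for many natural centres the game produces no link at all. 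Deferring this to ``case-by-case verification'' for thirty families is not a proof; it is exactly the enormous computation the paper is structured to avoid. The paper instead observes that these thirty families are precisely the ones satisfying the numerical condition $a_0a_1<\iota_X$, which by adjunction forces the general fibre of the projection $X\dasharrow\mathbb{P}(a_0,a_1)\cong\mathbb{P}^1$ (a hypersurface in $\mathbb{P}(a_1,a_2,a_3,a_4)$, or a complete intersection cut by $y^{a_0}=\lambda x^{a_1}$ for $n\in\{122,127,129,130\}$) to have negative Kodaira dimension. Resolving this pencil and running the relative MMP over $\mathbb{P}^1$ then terminates in a Mori fibre space --- a del Pezzo fibration over $\mathbb{P}^1$ or a conic bundle --- uniformly in all thirty cases, with no Sarkisov links and no case analysis. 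Note also that the statement of (i) only asks for a birational map to a Mori fibration, not an elementary link, so this soft argument suffices.

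A second, smaller imprecision: in part (ii) you conclude non-rigidity from the fact that the link $X\dasharrow X'$ ``is not an isomorphism.'' That is not enough --- birationally rigid Fano threefolds can admit many non-trivial birational self-maps (compositions of links from $X$ back to $X$). What you need, and what the paper supplies by exhibiting the targets explicitly (e.g.\ $Z_{10}\subset\mathbb{P}(1,1,1,3,5)$ with a $cE_6$ point for family \textnumero\,100), is that the Fano threefold at the other end of the link is \emph{not isomorphic} to $X$; this is visible from the tables since the targets have different singularities and anticanonical degrees.
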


\begin{corollary} 
\label{corollary:main}
A quasi-smooth Fano threefold hypersurface with terminal singularities is birationally rigid if and only if it has index one.
\end{corollary}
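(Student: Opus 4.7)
The plan is to deduce the corollary formally from the two theorems already stated in the introduction; no independent argument is needed beyond a small bookkeeping check.

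For the \emph{if} direction I would simply invoke Theorem~\ref{theorem:CPR}: if $\iota_X=1$, then $X$ is one of the $95$ index-one quasi-smooth hypersurfaces, and birational rigidity is provided by \cite{CPR,CP}.

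For the \emph{only if} direction the plan is to argue by contrapositive. Suppose $\iota_X\geqslant 2$; then $X$ appears as some family \textnumero\,$n$ in Table~\ref{table-rat}, so Theorem~\ref{main-theorem} applies and produces a birational map from $X$ to another Mori fibre space $Y$. In case (i), $Y$ has a base of positive dimension, whereas $X\to\mathrm{Spec}\,\mathbb{C}$ is a Mori fibre space over a point, so the two cannot be isomorphic as Mori fibre spaces; this already violates birational rigidity. In case (ii), the elementary Sarkisov link lands in another Fano threefold $Y$, and I would need to verify from the explicit constructions in Section~\ref{Fano-models} that $Y \not\cong X$ for each $n\in\{100,101,102,103,110\}$. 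This is the only point of the proof that is not purely formal.

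The expectation is that the non-isomorphism in case (ii) will be immediate from the construction: across each Sarkisov link the anticanonical degree, the singularity basket, or the weighted hypersurface data changes in a way that distinguishes $Y$ from $X$, so $X$ is birational to a strictly different Mori fibre space and hence fails to be birationally rigid. Once this transparent check is recorded alongside the five links of Section~\ref{Fano-models}, the corollary follows by combining the two directions.
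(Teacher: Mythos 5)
Your proposal is correct and matches the paper's (implicit) argument: the corollary is deduced formally from Theorem~\ref{theorem:CPR} for the \emph{if} direction and from Theorem~\ref{main-theorem} for the \emph{only if} direction. The one non-formal check you flag in case (ii) is exactly what Section~\ref{Fano-models} records: each link ends in a divisorial contraction to a hypersurface $Z$ in a different weighted projective space with a $cE_6$, $cE_7$, $cE_8$ or non-quotient singularity, which a quasi-smooth $X$ cannot have, so $Z\not\cong X$.
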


Our results go beyond the proof of birational non-rigidity. 
They also provide evidence for the following conjecture (see \cite[Definition\,1.4]{AO} for the definition of solid Fano varieties). 

\begin{conjecture} 
\label{conjecture:main}
A quasi-smooth Fano hypersurface $X$ with $\iota_X\geqslant 2$ is solid if and only if it belongs to one of the families \textnumero 100, \textnumero 101, \textnumero 102, \textnumero 103, \textnumero 110.
\end{conjecture}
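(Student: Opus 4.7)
The conjecture is an ``if and only if,'' and the easy direction is already contained in the main theorem: for every $n\notin\{100,101,102,103,110\}$, Theorem~\ref{main-theorem}(i) produces a birational map from a quasi-smooth $X_n$ to a Mori fibre space over a positive-dimensional base, so such an $X$ is not solid by definition. The substantive task is therefore to prove that a quasi-smooth member of one of the five exceptional families \textnumero 100, \textnumero 101, \textnumero 102, \textnumero 103, \textnumero 110 \emph{is} solid, i.e.\ is not birational to any conic bundle and not birational to any del Pezzo fibration.

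\textbf{Proof strategy for the ``if'' direction.}  I would work in the Sarkisov program.  Any birational map from $X$ to a Mori fibre space factors as a chain of Sarkisov links, and each link begins with a maximal centre of a mobile linear system satisfying the Noether--Fano inequality.  To prove solidity it suffices to classify \emph{all} maximal centres on $X$, describe the Sarkisov link initiated by each, and check that every such link terminates on another Mori fibre space whose base is a point, hence on a Fano threefold.  Schematically:
\begin{enumerate}[(1)]
\item enumerate the candidate maximal centres: smooth points, terminal cyclic quotient singularities, and curves of small anticanonical degree;
\item at each cyclic quotient singularity $p\in X$, perform the Kawamata blow-up $\varphi\colon Y\to X$ and run the two-ray game on $Y$; in each of the five families this should terminate in a divisorial contraction $\psi\colon Y\to X'$ onto another Fano threefold, as witnessed by Section~\ref{Fano-models};
\item exclude every smooth point of $X$ as a maximal centre using the Corti--Pukhlikov--Reid multiplicity inequalities, the standard test being that the cube $(-K_X)^3$ is too small to accommodate such a centre;
\item exclude all curves on $X$ as maximal centres by a case-by-case test against $-K_X\cdot C$ and the local intersection bounds of~\cite{CP,CPR}.
\end{enumerate}

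\textbf{Main obstacle.}  The principal difficulty lies in step~(4): uniformly excluding curves as maximal centres.  The numerical bounds used in \cite{CPR} for the index-one case weaken considerably once $\iota_X\geq 2$, and in families with $\iota_X\geq 2$ several low-degree curves survive the crudest tests, which is precisely why solidity fails outside the list of five.  Completing the proof for families \textnumero 100--103 and \textnumero 110 will require refined multiplicity estimates along the surviving distinguished curves, presumably combined with the explicit pencil geometry exploited in Sections~\ref{fibration} and~\ref{Fano-models}; showing that \emph{no} pencil of low anticanonical degree on such an $X$ yields, after untwisting, a birational del Pezzo fibration or conic bundle structure is, I expect, the decisive step.
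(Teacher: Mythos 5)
You should first be aware that the paper does not prove this statement at all: it is stated as a conjecture, and remains open there. The paper proves only the ``only if'' direction (Section~\ref{fibration}: every quasi-smooth $X$ with $\iota_X\geqslant 2$ outside the five families admits a birational map to a del Pezzo fibration or conic bundle, hence is not solid), and Section~\ref{sketch} gives a partial programme for the ``if'' direction which the authors explicitly say they cannot finish. Your proposal handles the ``only if'' direction exactly as the paper does, and your four-step Sarkisov strategy for the ``if'' direction is the same skeleton the paper sketches: exclude smooth points via Corti's inequality $\mathrm{mult}_p(\mathcal{M}^2)>4n^2$ (the paper's Lemma~\ref{lemma-excluding-smooth-point}), exclude curves, and analyse the quotient singularities via Kawamata blow-ups and two-ray games. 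So far this is a faithful match.

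The genuine problem is your diagnosis of where the difficulty lies. You claim step~(4), curve exclusion, is the decisive obstacle and would ``require refined multiplicity estimates.'' In fact this step is trivial for these five families: by the paper's Lemma~\ref{lemma-excluding-curve}, if a curve $C$ in the smooth locus were a centre then $(-K_X)^3 > -K_X\cdot C\geqslant 1$, while all five families have $(-K_X)^3<1$; curves through singular points are then ruled out by Kawamata's theorem on divisorial contractions over terminal cyclic quotient points. (Your related assertion that solidity fails outside the five families ``because low-degree curves survive the tests'' is also off: non-solidity there comes from the pencil $\psi\colon X\dashrightarrow\mathbb{P}(a_0,a_1)\cong\mathbb{P}^1$ and relative MMP, not from curve centres.) The actual gap — the one the paper itself cannot close — sits in your steps~(1)--(2): one must rigorously show that each \emph{lower-index} quotient singularity yields no elementary Sarkisov link (the ``no link''/``bad link'' cases of Table~\ref{table-exclusions}, which require careful justification, including unprojection constructions), and, more fundamentally, since the links that do exist terminate on \emph{other} Fano threefolds $Z$ (e.g.\ $Z_{10}\subset\mathbb{P}(1,1,1,3,5)$, $Z_{6,7}\subset\mathbb{P}(1,1,2,2,3,5)$, etc.), solidity of $X$ requires controlling all Sarkisov links out of those new models as well, so that no chain of links ever reaches a Mori fibre space over a positive-dimensional base. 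Neither your proposal nor the paper carries out this pliability analysis, so the ``if'' direction remains unproved in both.
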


In Section\,\ref{fibration}, we prove one direction of this conjecture, i.e., we show that 
if a quasi-smooth Fano threefold hypersurface $X$ with $\iota_X\geqslant 2$ is solid, 
then it must belong to one of the families \textnumero 100, \textnumero 101, \textnumero 102, \textnumero 103, \textnumero 110.
In Section\,\ref{sketch}, we indicate how to prove the other direction of our Conjecture~\ref{conjecture:main}.
Unfortunately, the technical difficulties in the final step of the indicated proof do not allow us to finish the proof.
We encourage the reader to fill this gap.

\subsection*{Acknowledgement} 
This project was initiated during a Research-in-Teams programme at the Erwin Schr\"odinger Institute for Mathematics and Physics during August 2018. 
The authors would like to express their gratitude to the ESI and its staff for their support and hospitality. The first author has been supported by EPSRC (grant EP/T015896/1), and the third author has been supported by IBS-R003-D1, Institute for Basic Science in Korea.

\section{Non-solid Fano threefolds}
\label{fibration}

Let $X$ be a quasi-smooth Fano threefold in family \textnumero\,$n$ in Table~\ref{table-rat}.
Then $X$ is a hypersurface in the weighted projective space $\mathbb{P}(a_0,a_1,a_2,a_3,a_4)$ of degree $d$. 
Suppose also that $n\not\in\{100,101,102,103,110\}$. Then the Fano index $i_X$ of $X$ satisfies   
$$
a_0a_1<i_X=a_0+a_1+a_2+a_3+a_4-d.
$$
Here, we assume that $a_0\leqslant a_1\leqslant a_2\leqslant a_3\leqslant a_4$ as in Table~\ref{table-rat}.

Let $\psi\colon X\dasharrow\mathbb{P}^1$ be the map given by the projection  $\mathbb{P}(a_0,a_1,a_2,a_3,a_4)\dasharrow\mathbb{P}(a_0,a_1)$.
Then there exists a commutative diagram 
$$
\xymatrix{
&\widetilde{X}\ar@{->}[ld]_{\pi}\ar@{->}[rd]^{\phi}&\\%
X\ar@{-->}[rr]_{\psi}&&\mathbb{P}^1}
$$
where $\widetilde{X}$ is a smooth threefold, $\pi$ is a birational map, and $\phi$ is a surjective morphism.
Let~$S$ be a sufficiently general fibre of the rational map $\psi$, and let $\widetilde{S}$ be its proper transform on the threefold $\widetilde{X}$.
If $n\not\in\{122,127,129,130\}$, then $S$ is a hypersurface in the weighted projective space $\mathbb{P}(a_1,a_2,a_3,a_4)$ of degree $d$.
Similarly, if $n\in\{122,127,129,130\}$, then the surface $S$ is a complete intersection in $\mathbb{P}(a_0,a_1,a_2,a_3,a_4)$ 
of  two hypersurfaces: the hypersurface $X$ of degree $d$, and the hypersurface of degree $a_0a_1$ that is given by
$$
y^{a_0}=\lambda x^{a_1}
$$ 
for some $\lambda\in\mathbb{C}$, where $x$ and $y$ are coordinates on $\mathbb{P}(a_0,a_1,a_2,a_3,a_4)$ of weights $a_0$ and~$a_1$, respectively.
Thus, in all cases, the Kodaira dimension of the surfaces $S$ is negative by adjunction formula,
so that $\widetilde{S}$ is uniruled.
In fact, one can show that $\widetilde{S}$ is rational.

Now we can apply (relative) Minimal Model Program to $\widetilde{X}$ over $\mathbb{P}^1$.
One possibility is that we obtain a commutative diagram 
$$
\xymatrix{
\widetilde{X}\ar@{->}[d]_{\phi}\ar@{-->}[rr]^{\chi}&&V\ar@{->}[d]^{\eta}\\%
\mathbb{P}^1\ar@{=}[rr]&&\mathbb{P}^1}
$$
where $\chi$ is a birational map, $V$ is a threefold with terminal $\mathbb{Q}$-factorial singularities, 
$\mathrm{rk}\,\mathrm{Pic}(V)=2$, and $\eta$ is a fibration into del Pezzo surfaces.
Another possibility is that we obtain a commutative diagram
$$
\xymatrix{
\widetilde{X}\ar@{->}[d]_{\phi}\ar@{-->}[rr]^{\chi}&&V\ar@{->}[d]^{\eta}\\%
\mathbb{P}^1&&S\ar@{->}[ll]^{\upsilon}}
$$
where $\chi$ is a birational map, $V$ is a threefold with terminal $\mathbb{Q}$-factorial singularities,
$S$~is a normal surface, $\eta$ is a conic bundle, $\mathrm{rk}\,\mathrm{Pic}(V)=\mathrm{rk}\,\mathrm{Pic}(S)+1$,
and $\upsilon$ is a surjective morphism with connected fibres.
In fact, the surface $S$ has Du Val singularities \cite{Mori-Prokhorov}.

\begin{corollary}
\label{corollary:not-solid}
Let $X$ be a quasi-smooth Fano threefold in family \textnumero\,$n$ in Table~\ref{table-rat} such that $n\not\in\{100,101,102,103,110\}$.
Then $X$ is not solid.
\end{corollary}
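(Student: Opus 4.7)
The plan is to read the corollary directly off the construction that immediately precedes it. The hypothesis $n \notin \{100,101,102,103,110\}$ is precisely the numerical inequality $a_0 a_1 < i_X$ displayed at the top of the section, and under this inequality the preceding paragraphs have already established that the generic fibre $S$ of $\psi$ has negative Kodaira dimension, by adjunction applied either to the hypersurface description $S \subset \mathbb{P}(a_1,a_2,a_3,a_4)$ of degree $d$ or, in the four cases $n \in \{122,127,129,130\}$, to the codimension-two complete intersection recorded above. Hence $\widetilde{S}$ is uniruled (in fact rational), and the general fibre of $\phi\colon \widetilde{X} \to \mathbb{P}^1$ is uniruled.

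Running a relative MMP for $\widetilde{X}$ over $\mathbb{P}^1$ therefore cannot terminate in a relative minimal model, and must end in one of the two Mori fibre space outputs displayed in the two commutative diagrams above: either a del Pezzo fibration $\eta\colon V \to \mathbb{P}^1$, or a conic bundle $\eta\colon V \to T$ whose base $T$ is a normal surface (with Du Val singularities, by Mori--Prokhorov) dominating $\mathbb{P}^1$. In either case, composing $\chi$ with $\pi^{-1}$ yields a birational map $X \dashrightarrow V$ onto a Mori fibre space whose base has positive dimension.

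A solid Fano variety in the sense of \cite[Definition~1.4]{AO} is, by definition, one that is not birational to any Mori fibre space with positive-dimensional base. The map $X \dashrightarrow V$ exhibited above therefore contradicts solidity of $X$, giving the corollary. The only non-formal ingredient in this plan is the case-by-case numerical verification of the inequality $a_0 a_1 < i_X$ against Table~\ref{table-rat} for every family outside $\{100,101,102,103,110\}$; this is precisely what isolates the five exceptional indices, and the rest of the argument is a routine application of adjunction together with three-dimensional relative MMP for uniruled threefolds with terminal $\mathbb{Q}$-factorial singularities. The potential point of care, rather than a genuine obstacle, is ensuring that the MMP output lands in one of the two advertised shapes and no other; this is guaranteed by the relative Picard-rank-one condition together with the dimension of the base.
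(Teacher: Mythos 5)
Your proposal is correct and follows exactly the paper's own argument: the corollary is stated there without a separate proof precisely because it is read off the preceding construction in Section~\ref{fibration} (the projection to $\mathbb{P}^1$, negativity of the Kodaira dimension of the general fibre via adjunction under $a_0a_1<i_X$, and the relative MMP terminating in a del Pezzo fibration or conic bundle), which is what you reproduce. Your added remarks --- that uniruledness of the fibres rules out a relative minimal model, and that the inequality $a_0a_1<i_X$ is exactly what adjunction needs in both the hypersurface and complete-intersection cases --- are accurate glosses on steps the paper leaves implicit.
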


In particular, if $n\not\in\{100,101,102,103,110\}$, then $X$ is not birationally rigid.

\section{Birationally non-rigid Fano threefolds}\label{bir-maps}
\label{Fano-models}

In this section, we will complete the proof of Theorem~\ref{main-theorem}. In each case, we regard the threefold as a hypersurface $X$ inside the weighted projective ambient space $\PP=\PP(a_0,a_1,a_2,a_3,a_4)$, and distinguish a quotient singularity $p\in X$; for families \textnumero\,$100$, \textnumero\,$101$, \textnumero\,$102$, \textnumero\,$103$ this point is $p=p_3=(0:0:0:1:0)$ and for family \textnumero\,$110$ it is $p=p_4=(0:0:0:0:1)$. The extremal extraction from this point is prescribed by Kawamata \cite{Kawamata-div} in a local analytic neighbourhood of $p$. We proceed by identifying a projective toric variety $T$ with Picard rank $2$ together with a birational morphism $\Phi:T\to\PP$. We then demand that $\varphi:Y\to X$ viewed locally near $p$ is the Kawamata blow up at the point $p$, where $Y=\Phi_*^{-1}(X)$ and $\varphi$ is simply the restriction of $\Phi$ to $Y$. Now, by construction, $Y$ has Picard number two, and hence admits a 2-ray game. We proceed by running the 2-ray game on $T$ and restricting it to $Y$, and each time we recover the 2-ray game of $Y$, which ends with a divisorial contraction to a Fano threefold not isomorphic to $X$. 
See \cite{AZ} for a comprehensive explanation of explicit 2-ray games and the links obtained from them.

We treat the first four families together, and then we consider family \textnumero\,$110$ separately as it has a higher index and behaves slightly differently. Let $X\subset\PP$ be a quasi-smooth member in one of the four families above with Fano index 2, and denote the variables by $x_0,\dots,x_4$. Note that in each case the defining polynomial $f(x_0,\cdots,x_4)$ contains two monomial $x_4^2$ and $x_3^3x_k$ with nonzero coefficient, where $k=2$ in families \textnumero\,$100$, \textnumero\,$102$, \textnumero\,$103$, and $k=0$ in family \textnumero\,$101$. This shows that $p$ is locally described by the quotient singularity
\[\frac{1}{a_3}(a_0,a_1,a_4-a_3)\text{ in the first three cases, and }\frac{1}{a_3}(a_1,a_2,a_4-a_3)\text{ in family \textnumero\,$101$}.\]

In each case $a_0+(a_4-a_3)\equiv0$ or $a_2+(a_4-a_3)\equiv0$ $\mod a_3$, and the third local coordinate weight is $2$, the index of $X$. In order to consider the Kawamata blow up of a 3-fold quotient singularity, the local description is expected to be of type
\[\frac{1}{r}(1,a,b)\text{, where }a+b\equiv 0\mod r.\]
For this, we can multiply the local weights above by $\lceil\frac{a_3}{2}\rceil$. If we denote the local coordinates of $\frac{1}{r}(1,a,b)$ with $x,y,z$, then the Kawamata blow up, with the new variable $u$, is given by 
\[u^\frac{1}{r}x,u^\frac{a}{r}y,u^\frac{b}{r}z.\]
This is because if the germ $\frac{1}{r}(1,a,b)$ is viewed as an affine toric variety with rays $\rho_x$, $\rho_y$ and $\rho_w$, then the Kawamata blow up of this toric variety is realised by adding a primitive new ray $\rho_u$ with relation
\[r\cdot\rho_u=1\cdot\rho_x+a\cdot\rho_y+b\cdot\rho_z.\]

We take advantage of this. We view $\PP$ as a projective toric variety defined by $5$ primitive rays in $\mathbb{Z}^4$ and one relation amongst them, namely
\[\sum_{i=0}^4 a_i\cdot\rho_i=0,\]
where $\rho_i$ is the primitive ray corresponding to the toric divisor $D_i=\{x_i=0\}$. Using this, we define a rank two toric variety $T$ with coordinate weights given by the matrix
\begin{equation}\label{matrixPt100}\left(\begin{array}{cccccc}
u&y_3&y_4&y_2&y_1&y_0\\
0&a_3&a_4&a_2&a_1&a_0\\
-a_3&0&b_4&b_2&b_1&b_0
\end{array}\right)\end{equation}
where $b_i$ is the smallest positive integer congruent to $\lceil\frac{a_3}{2}\rceil\cdot a_i$ modulo $a_3$ when $x_i$ appears as a local coordinate of the singular point $p\in X$, otherwise $b_i=a_4-a_3$. Let us explain this by an example. Suppose $X$ is in family \textnumero\,$100$. Then the defining equation of $X$ is of the form
\[x_3^3x_2=x_4^2+x_1^9+x_2^3x_4+\cdots .\]
In an analytic neighbourhood of $p$ we can eliminate the (tangent) variable $x_2$, which gives local coordinates $x_0,x_1,x_4$ with a finite group action of $\mathbb{Z}_5$ by
\[\frac{1}{a_3}(a_0,a_1,a_4-a_3)=\frac{1}{5}(1,2,4),\]
which can be seen as
\[\frac{1}{a_3}(b_0,b_1,b_4)=\frac{1}{5}(3,1,2).\]
Clearly, the Kawamata blow up is locally given by
\[(u,y_0,y_1,y_4)\mapsto(u^\frac{3}{5}y_0,u^\frac{1}{5}y_1,u^\frac{2}{5}y_4)=(x_0,x_1,x_4).\]
Plugging these into the equation of $X$ above shows that the multiplicity of $u$ in the right hand side of the equation is exactly $\frac{4}{5}$, precisely due to the presence of $x_4^2$ in the equation. Other cases are similar.

The map $\Phi:T\to \PP$ is defined by the restriction to the $y_3$-wall, which defines
\[(u,y_3,y_4,y_2,y_1,y_0)\mapsto(u^\frac{b_0}{a_3}y_0:u^\frac{b_1}{a_3}y_1:u^\frac{b_2}{a_3}y_2:y_3:u^\frac{b_4}{a_3}y_4)=(x_0:x_1:x_2:x_3:x_4).\]
Consequently, $Y\subset T$ is defined by the vanishing of the polynomial
\[
g(u,y_0,\cdots,y_4)=\frac{1}{u^\frac{a_3}{a_4-a_3}}\cdot f(u^\frac{b_0}{a_3}x_0,u^\frac{b_1}{a_3}x_1,u^\frac{b_2}{a_3}x_2,x_3,u^\frac{b_4}{a_3}x_4).
\]

The nonzero determinants of the $2\times 2$ minors of the matrix above are all divisible by $a_3$, which makes this particular description of $T$ ``not well-formed'' (see \cite{Hamid-Crelle} for a general treatment). However, this can easily be fixed by an action of the matrix
\[\left(\begin{array}{cc}
1/a_3&-2/a_3\\
0&1
\end{array}\right),\]
which, for example, transforms (\ref{matrixPt100}) in the case of family \textnumero\,$100$ into 
\[\left(\begin{array}{cccccc}
u&y_3&y_4&y_1&y_2&y_0\\
2&1&1&0&-1&-1\\
-5&0&2&1&4&3
\end{array}\right)\]
Note that we have also rearranged the columns of the matrix (easily tractable using the variables order). This is because we will run the 2-ray game on $T$ according to the GIT chambers of the action of $(\mathbb{C}^*)^2$.

Restricting the wall-crossing to $Y$ over the $y_4$-wall is an isomorphism on $Y$ as $g$ includes the term $y_4^2$. Crossing the next wall on $T$ is locally the flip $(2,1,1,-1,-1)$, read-off from the first row of the wellformed (\ref{matrixPt100}) after setting $y_i=1$, the variable corresponding to the next wall ($i=1$ for families \textnumero\,$100, 101, 102$ and $i=0$ for family \textnumero\,$103$). This restricts to an Atiyah flop $(1,1,-1,-1)$ as $g$ includes the monomial $uy_i^\alpha$ with nonzero coefficient, where $\alpha=\frac{\deg f}{2}$. Note that this coefficient is nonzero as a consequence of $X$ being quasi-smooth. The last wall, that is the $y_2$-wall, contracts the divisor $\{x=0\}$ on $T$ (or $\{x=0\}$ in the case of \textnumero\,$103$). The restriction to $Y$ is always a divisorial contraction to a Fano 3-fold with a singular point at the image of the contraction. Table \ref{modelsind2} contains the information on the image in each case.

\begin{table}[h]\small
\caption{\footnotesize Birational models for Fano hypersurfaces in families \text{\textnumero}\,$100, 101, 102, 103$.}\label{modelsind2}
$$
\renewcommand\arraystretch{1.5}
\begin{array}{|c|c|c|}
\hline
\text{\textnumero}&p_3\in X_d\subset\PP(a_0,a_2,a_2,a_3,a_4)&\text{New Model}\\
\hline
\hline
100 &\frac{1}{5}(3,1,2)\in X_{18}\subset\PP(1,2,3,5,9)&cE_6\in Z_{10}\subset\PP(1,1,1,3,5)\\
\hline
101 &\frac{1}{7}(1,5,2)\in X_{22}\subset\PP(1,2,3,7,11)&cE_7\in Z_{12}\subset\PP(1,1,1,4,6)\\
\hline
102 &\frac{1}{7}(4,1,3)\in X_{26}\subset\PP(1,2,5,7,13)&\frac{1}{2}(1,1,1,0;0)\in Z_{14}\subset\PP(1,1,2,4,7)\\
\hline
103 &\frac{1}{11}(1,7,4)\in X_{38}\subset\PP(2,3,5,11,19)&cE_8\in Z_{22}\subset\PP(1,1,3,7,11)\\
\hline
\end{array}
$$
\end{table}

Now we turn our attention to Family \textnumero\,$110$. A quasi-smooth member in this family is hypersurface $X_{21}\subset\PP(1,3,5,7,8)$ and contains a singular point of type $\frac{1}{8}(1,3,7)$ as the defining polynomial contain the monomial $x_4^2x_2$. In Kawamata format this singularity is of type $\frac{1}{8}(3,2,5)$. Similar to the computations above, the toric variety $T$ is given by the matrix

\[\left(\begin{array}{cccccc}
u&y_4&y_1&y_3&y_2&y_0\\
0&8&3&7&5&1\\
-8&0&1&5&7&3
\end{array}\right),\]
which can be wellformed to
\[\left(\begin{array}{cccccc}
u&y_4&y_1&y_3&y_2&y_0\\
5&7&2&3&0&-1\\
-8&0&1&5&7&3
\end{array}\right),\]
The 2-ray game of $T$ restricts to $Y$ via an isomorphism followed by a flip of type $(5,1,-3,-2)$, then it ends with a divisorial contraction to a singular point of type $cE_7$ on a hypersurface $Z_7\subset\PP(1,1,1,2,3)$.

\section{Evidence for Conjecture~\ref{conjecture:main}} 
\label{sketch}

Let us first prove that no smooth point or curve on any quasi-smooth member in families \textnumero\,$100, 101, 102, 103, 110$ can be a centre of maximal singularities (see \cite{Pukh-Max} for an introduction to the theory of maximal singularities).

\begin{lemma}\label{lemma-excluding-smooth-point}
Let $X$ be a quasi-smooth Fano hypersurface of degree $d$ in the weighted projective space $\mathbb{P}(a_0, a_1, a_2, a_3, a_4)$ that belongs to one of the families \textnumero\,$100, 101, 102,103,$ or $110$. Let $\mathcal{M}$ be a mobile linear subsystem in $|-nK_X|$ for some positive integer $n$. Then any smooth point in $X$ cannot be a centre of non-canonical singularities of the pair $(X,\frac{1}{n}\mathcal{M})$.
\end{lemma}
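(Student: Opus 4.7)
The plan is a standard $4n^2$-inequality argument, combined with an intersection theoretic bound against an auxiliary surface of small weighted degree. Assume for contradiction that a smooth point $p\in X$ is a centre of non-canonical singularities of the pair $(X,\frac{1}{n}\mathcal{M})$. By the Corti--Pukhlikov inequality for smooth points of a threefold (see \cite{Pukh-Max}), two general members $D_1,D_2\in\mathcal{M}$ then satisfy
$$
\mathrm{mult}_p(D_1\cdot D_2)>4n^2,
$$
as an effective $1$-cycle on $X$.

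The idea is to contradict this bound by intersecting $D_1\cdot D_2$ with a carefully chosen irreducible surface through $p$. Let $A$ denote the ample generator of the class group of $X$, so that $-K_X=\iota_X A$, $D_i\sim n\iota_X A$, and $A^3=d/(a_0a_1a_2a_3a_4)$. Suppose $T\subset X$ is an irreducible surface through $p$ of class $kA$ which contains no component of the $1$-cycle $D_1\cdot D_2$. Then the proper-intersection inequality yields
$$
T\cdot D_1\cdot D_2\geqslant\mathrm{mult}_p(T)\cdot\mathrm{mult}_p(D_1\cdot D_2)>4n^2\mathrm{mult}_p(T),
$$
while direct computation gives
$$
T\cdot D_1\cdot D_2=n^2\iota_X^2\cdot\frac{kd}{a_0a_1a_2a_3a_4}.
$$
For families \textnumero\,$100,101,102,103,110$ the constant $\iota_X^2 d/(a_0a_1a_2a_3a_4)$ evaluates to $\tfrac{4}{15},\tfrac{4}{21},\tfrac{4}{35},\tfrac{4}{165},\tfrac{9}{40}$, respectively. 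A contradiction follows as soon as one can exhibit such a $T$ with $\mathrm{mult}_p(T)=1$ and $k<15$, $21$, $35$, $165$, $\tfrac{160}{9}$, respectively.

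It therefore remains to produce, for every smooth point $p\in X$, an irreducible surface $T\sim kA$ through $p$, smooth at $p$, meeting $D_1\cdot D_2$ properly, and with $k$ below the above (very generous) bound. This is a case analysis by the coordinate stratum of $p$: for a generic point in the quasi-smooth locus, a general member of the pencil $|2A|$ or $|3A|$, spanned by the low-degree monomials $x_0^{c},x_1,\ldots$, works and has $k\in\{2,3\}$; for $p$ lying on several coordinate hyperplanes $\{x_i=0\}$ one must use higher-degree combinations of monomials in $H^0(\mathbb{P},\mathcal{O}(k))$ to write down a hypersurface vanishing at $p$, but the upper bounds on $k$ leave ample room, and in practice $k\leqslant 6$ always suffices.

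The main obstacle lies entirely in this last step: a genuine case analysis over the position of $p$, with the quasi-smoothness of $X$ being used at each step to guarantee that the chosen surface $T\cap X$ is irreducible and reduced (in particular, that the natural hypersurface of small weighted degree through $p$ does not pick up an unwanted component of $X$ or split into two pieces on $X$). The arithmetic itself is essentially trivial; all the real work is combinatorial and performed family by family.
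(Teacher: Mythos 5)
Your overall strategy---the $4n^2$-inequality at a smooth point followed by intersection with a low-degree auxiliary surface through $p$---is exactly the paper's, and your arithmetic is right (the constants $\tfrac{4}{15},\tfrac{4}{21},\tfrac{4}{35},\tfrac{4}{165},\tfrac{9}{40}$ and the resulting ceilings on $k$). The genuine gap is the step you wave off as a routine case analysis: producing the surface $T$ is the \emph{entire} content of the lemma, and the quantitative claims you make about it are false. In family \textnumero\,103, $X_{38}\subset\PP(2,3,5,11,19)$, the systems $|2A|$ and $|3A|$ are not pencils but single surfaces, $\{x_0=0\}$ and $\{x_1=0\}$, which do not pass through a general point at all; $|6A|$ is the pencil spanned by $x_0^3,x_1^2$, so through a given point there is exactly one member, leaving no freedom to make it irreducible or to make it avoid components of $D_1\cdot D_2$. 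So ``$k\leqslant 6$ always suffices'' fails outright, and there is no ``ample room'' either: the smallest degree that works in family \textnumero\,103 is $k=165=a_1a_2a_3$, i.e., \emph{exactly} the upper bound your numerics allow, the contradiction $4n^2>4n^2$ surviving only because the multiplicity inequality is strict. Likewise the paper uses $k=15$ in family \textnumero\,110 against your ceiling $160/9\approx 17.8$. The numerics are razor-thin, not generous, so the construction of $T$ cannot be outsourced to ``in practice''.

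The second missing ingredient is the mechanism that guarantees $T$ meets the cycle $D_1\cdot D_2$ properly. The paper does not hunt for an irreducible surface stratum by stratum; it takes $H$ to be a \emph{general} member of the linear subsystem of $|\mathcal{O}_X(k)|$ consisting of divisors through $p$, and verifies that this subsystem has zero-dimensional base locus. That property is what forces $H$ to avoid every curve in the fixed finite set of components of $D_1\cdot D_2$ (a curve lying on all members through $p$ would lie in that base locus), and it is precisely this verification that the paper flags as ``easy but a bit tedious'' in family \textnumero\,110. Since $D_1\cdot D_2$ is not under your control, ``meeting it properly'' cannot be arranged without such a statement about the whole subsystem through $p$. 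As a final caution on how delicate this step is: even the paper's own degree choice $k=a_1a_2a_3$ only produces a contradiction when $a_0=2$ (family \textnumero\,103); for families \textnumero\,100--102, where $a_0=1$, it yields $8n^2>4n^2$, which is no contradiction, and one must instead take $k=a_2a_3=\tfrac{a_0a_1a_2a_3a_4}{d}$ to land exactly on $4n^2$---so even the authors' version of this step needs repair, and yours, which claims much smaller degrees, is further from working.
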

\begin{proof}
Let $p$ be a smooth point of $X$ and suppose that $p$ is a centre of non-canonical singularities of the pair $(X,\frac{1}{n}\mathcal{M})$. We then obtain
\[\mathrm{mult}_p(\mathcal{M}^2)>4n^2\]
from \cite[Corollary~3.4]{Corti}.
We now seek for a contradiction case by case.
For the families \textnumero\,$100, 101, 102,103$ we consider a general member $H$ in $|\mathcal{O}_X(a_1a_2a_3)|$ that passes through the point~$p$. Since the linear subsystem of  $|\mathcal{O}_X(a_1a_2a_3)|$ consisting of members passing through the point $p$ has a zero-dimensional base locus, $H$ contains no $1$-dimensional components of the base locus of $\mathcal{M}$. Then
\[\frac{4a_1a_2a_3n^2d}{a_0a_1a_2a_3a_4}=H\cdot\mathcal{M}^2\geqslant \mathrm{mult}_p(H)
\mathrm{mult}_p(\mathcal{M}^2)>4n^2,\]
which yields an absurd inequality $2>a_0$.

For the family \textnumero\,$110$ we consider a general member $H$ in $|\mathcal{O}_X(15)|$ that passes through the point $p$.   It is easy (but a bit tedious)  to check that  the base locus of the linear subsystem of~$|\mathcal{O}_X(15)|$ consisting of members passing through the point $p$ is zero-dimensional. Then
\[\frac{27 n^2}{8}=H\cdot\mathcal{M}^2\geqslant \mathrm{mult}_p(H)
\mathrm{mult}_p(\mathcal{M}^2)>4n^2,\]
which is absurd.
\end{proof}

\begin{lemma}\label{lemma-excluding-curve}
Under the same condition as Lemma~\ref{lemma-excluding-smooth-point}, any curve contained in the smooth locus of $X$ cannot be a centre of non-canonical singularities of the pair $(X,\frac{1}{n}\mathcal{M})$.
\end{lemma}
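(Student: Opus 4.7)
The plan is to imitate Lemma~\ref{lemma-excluding-smooth-point}, replacing the multiplicity bound at a point by the multiplicity of the cycle $M_1\cdot M_2$ along $C$, where $M_1,M_2$ are two general members of $\mathcal{M}$, and exploiting the crucial fact that a curve contained in the smooth locus of $X$ has positive \emph{integer} intersection with the generator $A$ of the class group.

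Assume that a curve $C$ contained in the smooth locus of $X$ is a centre of non-canonical singularities of $(X,\frac{1}{n}\mathcal{M})$. Then the standard discrepancy computation for the blow-up of a smooth curve on a smooth threefold, applied locally near $C$, gives $\mathrm{mult}_C\mathcal{M}>n$, so that the effective $1$-cycle $M_1\cdot M_2$ contains $C$ with coefficient strictly greater than $n^2$. I would then pick $k\gg 0$ so that $\mathcal{O}_X(k)$ is very ample, choose a general $H\in|kA|$ with $C\not\subset H$, and compute the triple intersection $M_1\cdot M_2\cdot H$ in two ways: on one hand $M_i\sim n\iota_X A$ gives $M_1\cdot M_2\cdot H=n^2\iota_X^2 k\cdot A^3$; on the other hand the contribution of $C$ gives
$$
M_1\cdot M_2\cdot H\;>\;n^2\cdot(C\cdot H)\;=\;n^2 k\cdot(A\cdot C).
$$
Combining the two yields $\iota_X^2 A^3>A\cdot C$.

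The final step is to note that since $C$ lies in the smooth locus of $X$, the Weil divisor $A$ is Cartier on an open neighbourhood of $C$, and therefore $A\cdot C=\deg(A|_C)\in\mathbb{Z}$; ampleness of $A$ then forces $A\cdot C\geqslant 1$. A direct computation gives $\iota_X^2 A^3=\iota_X^2 d/(a_0a_1a_2a_3a_4)$, equal to $4/15$, $4/21$, $4/35$, $4/165$ and $9/40$ in families \textnumero\,$100$, \textnumero\,$101$, \textnumero\,$102$, \textnumero\,$103$, \textnumero\,$110$ respectively. Each of these is strictly less than $1$, and the inequality $\iota_X^2 A^3>A\cdot C\geqslant 1$ is therefore absurd. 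The argument has no serious obstacle; the one delicate point is the integrality of $A\cdot C$, which crucially uses that $C$ misses every singular point of $X$.
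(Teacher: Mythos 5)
Your proof is correct and takes essentially the same route as the paper: the paper likewise deduces $\mathrm{mult}_C(\mathcal{M})>n$, intersects two general members of $\mathcal{M}$ with a test divisor ($-K_X$ rather than your $kA$), and uses integrality of the resulting degree on a curve lying in the smooth locus to reach the numerical contradiction $(-K_X)^3>1$, which fails for all five families. Replacing $-K_X$ by $A$ only changes the threshold from $\iota_X^3A^3<1$ to $\iota_X^2A^3<1$; both hold here, so this is an inessential variation.
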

\begin{proof}
Let $C$ be a curve  that is contained in the smooth locus of $X$ and suppose that $C$ is a centre of non-canonical singularities of the pair $(X,\frac{1}{n}\mathcal{M})$. We then obtain
\[\mathrm{mult}_C(\mathcal{M})>n.\]
Choose two general members $H_1$ and $H_2$ from the mobile linear system $\mathcal{M}$. We then obtain
\[n^2\left(-K_X\right)^3=-K_X\cdot H_1\cdot H_2\geqslant \left( \mathrm{mult}_C(\mathcal{M})\right)^2\left(-K_X\cdot C\right)>n^2\left(-K_X\cdot C\right).\]
Since the curve $C$ is contained in the smooth locus of $X$, the inequality above implies that $$\left(-K_X\right)^3>1.$$
However, the anticanonical classes of the families \textnumero\,$100, 101, 102,103, 110$ have self-intersection numbers less than $1$. This completes the proof.
\end{proof}

Together with \cite[Theorem~5]{Kawamata-div}, the lemma above shows that a curve on $X$ cannot be a centre of non-canonical singularities of the pair $(X,\frac{1}{n}\mathcal{M})$. In conclusion, it follows from Lemmas~\ref{lemma-excluding-smooth-point}
and~\ref{lemma-excluding-curve} that only singular points of $X$ can be  centres of non-canonical singularities of the pair $(X,\frac{1}{n}\mathcal{M})$.

It remains to study the singular points on $X$. Recall from Section\,\ref{bir-maps} that in each family there exists a birational map to another Fano threefold constructed via an elementary Sarkisov link starting from the Kawamata blow up of the highest index quotient singularity. We now concentrate on lower index singularities and show that in each case of Fano index 2 these singular points produce no elementary link, and in the index 3 case we obtain a new birational map to a Fano threefold. Let us proceed with the latter.

A quasi-smooth $X_{21}\subset\PP(1,3,5,7,8)$ contains two singular points of type $\frac{1}{8}(1,3,7)$ and $\frac{1}{5}(3,2,3)$. A typical equation for this hypersurface is
\[f=x_2^4x_0+x_4^2x_2+x_3^3+x_1^7+\cdots .\]
Indeed, the tangent space to $p=p_2$ is determined by $x_0$ and the local coordinates of the tangent space are $x_1,x_3,x_4$. In Kawamata format this singularity is of type $\frac{1}{5}(1,4,1)$ and the local blow up is given by
\[(u,y_1,y_3,y_4)\mapsto (u^\frac{1}{5}x_1,u^\frac{4}{5}x_3,u^\frac{1}{5}x_4).\]
So we can define the rank two toric variety $T$ via the matrix
\[\left(\begin{array}{cccccc}
u&y_2&y_4&y_1&y_3&y_0\\
0&5&8&3&7&1\\
-5&0&1&1&4&2
\end{array}\right)\]
together with the map
\[\Phi(u,y_2,y_4,y_1,y_3,y_0)=(u^\frac{2}{5}x_0:u^\frac{1}{5}x_1:x_2:u^\frac{4}{5}x_3:u^\frac{1}{5}x_4)\in\PP(1,3,5,7,8).\]
Note that the blow up weight of the variable $x_0$ is determined by the multiplicity of $f$, which is $\frac{2}{5}$.
It is easy to see that $Y$ does not follow the 2-ray game of $T$. This is because $g$, the defining equation of $Y$, belong to the irrelevant ideal $(u,y_2)\cap(y_4,y_1,y_3,y_0)$ as
\[g=u(y_1^7+uy_3^3+\cdots)+y_2(y_4^2+y_2^3y_0+\cdots).\]
This can be resolved by replacing $T$ with a higher dimensional toric variety (see \cite{AZ} for an explanation and examples), using an unprojection defined by the cross-ratio 
\[y=\frac{y_1^7+uy_3^3+\cdots}{-y_2}=\frac{y_4^2+y_2^3y_0+\cdots}{u},\]
which defines an isomorphism between $Y$ and the complete intersection of
\[yy_2+y_1^7+uy_3^3+\cdots=-uy+y_4^2+y_2^3y_0+\cdots=0\]
inside the toric variety defined by
\[\left(\begin{array}{ccccccc}
u&y_2&y_4&y_1&y&y_3&y_0\\
0&5&8&3&16&7&1\\
-5&0&1&1&7&4&2
\end{array}\right)\]
which can be wellformed into
\[\left(\begin{array}{ccccccc}
u&y_2&y_4&y_1&y&y_3&y_0\\
3&1&1&0&-1&-1&-1\\
21&7&8&1&0&-3&-5
\end{array}\right).\]
The 2-ray game of the toric variety restricts to a 2-ray game on $Y$ by two isomorphisms followed by a flip of type $(8,1,-3,-5)$ and ends in a divisorial contraction into a $cE_7/2$ singular point on the complete intersection $Z_{6,7}\subset\PP(1,1,2,2,3,5)$.

The exclusion in all other quotient singularities for the 4 families of index 2 follow the same principle. We first identify the quotient singularity, with a ``key monomial'', which defines the tangent space. Then we construct the rank two toric variety with weights prescribed by Kawamata blow up and one weight dictated by the tangent multiplicity. If needed, we perform an unprojection as above. In each case we either obtain no link, when $-K_Y$ is outside the closure of the cone of movable divisors, or we obtain a bad link, when $-K_Y$ is in the boundary of the cone of movable divisors (see \cite{AZ} for justification of exclusions). So, neither case results in an elementary Sarkisov link. We capture these in Table\,\ref{table-exclusions}, with the essential data needed to carry on the computation. Instead of writing the full matrix of $T$ in each case we only write the blow up weights indexed by the corresponding variable in anticlockwise order of the GIT chambers. Whenever an unprojection is needed there is an extra variable $y$ in the list of weight-variables denoted by $m_y(n)$, where $m$ is weight corresponding to the blow up and $n$ is the other weight (in relation to the weighted projective space $\PP$).

\begin{table}[h]\small
\caption{\footnotesize Exclusion of links in families \text{\textnumero}\,$100, 101, 102, 103$.}\label{table-exclusions}
$$
\renewcommand\arraystretch{1.5}
\begin{array}{|c|c|c|c|c|}
\hline
\text{\textnumero}&\text{singularity}&\text{key monomial}&\text{blow up}&\text{exclusion}\\
\hline
\hline
100 &2\times\frac{1}{3}(1,2,2)&x_4^2+x_4x_2^2&-3_u,0_{y_2},1_{y_3},3_{y_4},6_y(15),1_{y_1},2_{y_2}&\text{bad link}\\
\hline
101 &\frac{1}{3}(1,2,2)&x_2^3x_4\text{ or }x_2^7x_0&-3_u,0_{y_2},1_{y_4},2_{y_4},1_{y_1},2_{y_0}&\text{bad link}\\
\hline
102 &\frac{1}{5}(2,2,3)&x_2^5x_0&-5_u,0_{y_2},1_{y_3},4_{y_4},8_y(21),1_{y_1},3_{y_0}&\text{bad link}\\
\hline
103 &\frac{1}{3}(1,1,1)&\begin{array}{c}
x_1^9x_4\\
x_1^{11}x_2\\
x_1^{13}x_0
 \end{array}&\begin{array}{c}
 -3_u,0_{y_1},2_{y_4},1_{y_2},4_{y_3},1_{y_0}\\
-3_u,0_{y_1},1_{y_3},2_{y_4},1_{y_0},4_{y_2} \\
 -3_u,0_{y_1},1_{y_3},2_{y_4},1_{y_2},4_{y_0}
 \end{array}&\begin{array}{c}
 \text{no link}\\
 \text{bad link}\\
 \text{no link}
 \end{array}\\
\hline
 103&\frac{1}{5}(2,1,4)&x_2^7x_1&-5_u,0_{y_2},2_{y_4},3_{y_3},1_{y_0},4_{y_1}&\text{bad link}\\
\hline
\end{array}
$$
\end{table}

\bibliographystyle{amsplain}
\bibliography{bib}

\providecommand{\bysame}{\leavevmode\hbox to3em{\hrulefill}\thinspace}
\providecommand{\MR}{\relax\ifhmode\unskip\space\fi MR }
\providecommand{\MRhref}[2]{%
  \href{http://www.ams.org/mathscinet-getitem?mr=#1}{#2}
}
\providecommand{\href}[2]{#2}
\begin{thebibliography}{10}

\bibitem{Hamid-Crelle}
Hamid Ahmadinezhad, \emph{On pliability of del {P}ezzo fibrations and {C}ox
  rings}, J. Reine Angew. Math. \textbf{723} (2017), 101--125.

\bibitem{AO}
Hamid Ahmadinezhad and Takuzo Okada, \emph{Birationally rigid {P}faffian {F}ano
  3-folds}, Algebr. Geom. \textbf{5} (2018), no.~2, 160--199.

\bibitem{AZ}
Hamid Ahmadinezhad and Francesco Zucconi, \emph{Mori dream spaces and
  birational rigidity of {F}ano 3-folds}, Adv. Math. \textbf{292} (2016),
  410--445.

\bibitem{ABR}
Selma Alt{\i}nok, Gavin Brown, and Miles Reid, \emph{Fano 3-folds, {$K3$}
  surfaces and graded rings}, Topology and geometry: commemorating {SISTAG},
  Contemp. Math., vol. 314, Amer. Math. Soc., Providence, RI, 2002, pp.~25--53.

\bibitem{Birkar}
Caucher Birkar, \emph{Singularities of linear systems and boundedness of {F}ano
  varieties}, arXiv:1609.05543 (2016), 33 pages.

\bibitem{Brown-Suzuki}
Gavin Brown and Kaori Suzuki, \emph{Fano 3-folds with divisible anticanonical
  class}, Manuscripta Math. \textbf{123} (2007), no.~1, 37--51.

\bibitem{CP}
Ivan Cheltsov and Jihun Park, \emph{Birationally rigid {F}ano threefold
  hypersurfaces}, Mem. Amer. Math. Soc. \textbf{246} (2017), no.~1167, v+117.

\bibitem{Clemens-Griffiths}
Herbert Clemens and Phillip Griffiths, \emph{The intermediate {J}acobian of the
  cubic threefold}, Ann. of Math. (2) \textbf{95} (1972), 281--356.

\bibitem{Corti}
Alessio Corti, \emph{Singularities of linear systems and $3$-fold birational
  geometry}, Explicit birational geometry of 3-folds, London Math. Soc. Lecture
  Note Ser., vol. 281, Cambridge Univ. Press, Cambridge, 2000, pp.~259--312.

\bibitem{CPR}
Alessio Corti, Aleksandr Pukhlikov, and Miles Reid, \emph{Fano {$3$}-fold
  hypersurfaces}, Explicit birational geometry of 3-folds, London Math. Soc.
  Lecture Note Ser., vol. 281, Cambridge Univ. Press, Cambridge, 2000,
  pp.~175--258.

\bibitem{Grin1}
Mikhail Grinenko, \emph{On the double cone over the {V}eronese surface}, Izv.
  Ross. Akad. Nauk Ser. Mat. \textbf{67} (2003), 5--22.

\bibitem{Grin2}
\bysame, \emph{Mori structures on a {F}ano threefold of index 2 and degree 1},
  Tr. Mat. Inst. Steklova \textbf{246} (2004), 116--141.

\bibitem{Kawamata}
Yujiro Kawamata, \emph{Boundedness of {$\mathbb{Q}$}-{F}ano threefolds},
  Proceedings of the {I}nternational {C}onference on {A}lgebra, {P}art 3
  ({N}ovosibirsk, 1989), Contemp. Math., vol. 131, Amer. Math. Soc.,
  Providence, RI, 1992, pp.~439--445.

\bibitem{Kawamata-div}
\bysame, \emph{Divisorial contractions to $3$-dimensional terminal quotient
  singularities}, Higher-dimensional complex varieties ({T}rento, 1994), de
  Gruyter, Berlin, 1996, pp.~241--246. \MR{1463182}

\bibitem{Mori-Prokhorov}
Shigefumi Mori and Yuri Prokhorov, \emph{On {$\Bbb Q$}-conic bundles}, Publ.
  Res. Inst. Math. Sci. \textbf{44} (2008), no.~2, 315--369. \MR{2426350}

\bibitem{Okada-rational}
Takuzo Okada, \emph{Stable rationality of orbifold {F}ano threefold
  hypersurfaces}, J. Algebraic Geom. \textbf{28} (2019), 99--138.

\bibitem{Pukh-Max}
Aleksandr Pukhlikov, \emph{Essentials of the method of maximal singularities},
  Explicit birational geometry of 3-folds, London Math. Soc. Lecture Note Ser.,
  vol. 281, Cambridge Univ. Press, Cambridge, 2000, pp.~73--100.

\bibitem{Voisin}
Claire Voisin, \emph{Unirational threefolds with no universal codimension {$2$}
  cycle}, Invent. Math. \textbf{201} (2015), no.~1, 207--237.

\end{thebibliography}

\end{document}